\newtheorem{thm}{Theorem}[section]
\newtheorem{cor}[thm]{Corollary}
\newtheorem{lem}[thm]{Lemma}
\theoremstyle{definition}
\newtheorem{defin}[thm]{Definition}
\numberwithin{equation}{section}
\newcommand{\zfc}{\mathnormal{\mathsf{ZFC}}}
\newcommand{\ch}{\mathnormal{\mathsf{CH}}}
\newcommand{\fu}{\mathop{\mathrm{FU}}}
\newcommand{\cpa}{\mathnormal{\mathsf{CPA}}}
\newcommand{\homh}{\mathnormal{\mathfrak{hom}_H}}
\newcommand{\covm}{\mathnormal{\mathrm{cov}(\mathcal M)}}
\DeclareMathOperator{\ran}{ran}
\DeclareMathOperator{\cov}{cov}
\begin{document}

%%%%% To ease editing, for IMPAN journals add:

\baselineskip=17pt

%%%%%%%%%%%

%% In the running head, replace first names by initials 
%% and give an abbreviation of the title.

\title[Union ultrafilters]{A parametrized diamond principle and union ultrafilters}

\author[D. Fern\'andez]{David Fern\'andez-Bret\'on}
\address{
Department of Mathematics\\
University of Michigan\\
2074 East Hall, 530 Church Street \\
Ann Arbor, MI 48109-1043, U.S.A.}
\email{djfernan@umich.edu}
\urladdr{http://www-personal.umich.edu/\textasciitilde djfernan/}

\author[M. Hru\v{s}\'ak]{Michael Hru\v{s}\'ak}
\address{
Centro de Ciencias Matem\'aticas\\
UNAM--Campus Morelia\\ 
C.P. 58190\\
Morelia, Michoac\'an\\
M\'exico}
\email{michael@matmor.unam.mx}
\urladdr{http://www.matmor.unam.mx/\textasciitilde michael/}

\date{}

\begin{abstract}
We consider a cardinal invariant closely related to Hindman's theorem. We prove that this cardinal invariant is small in the iterated Sacks perfect set forcing model, and that its corresponding parametrized diamond principle implies the existence of union ultrafilters. 
As a corollary, this establishes the existence of union ultrafilters in the iterated Sacks model of Set Theory.
\end{abstract}

\subjclass[2010]{Primary 03E35; Secondary 03E65, 03E17, 54D80.}

\keywords{Hindman's theorem, union ultrafilters, parametrized diamond principles, cardinal invariants of the continuum, iterated perfect set forcing, iterated Sacks model.}

\maketitle

\section{Introduction}

Union ultrafilters are ultrafilters that relate naturally to Hindman's theorem (in much the same way that Ramsey ultrafilters relate naturally to Ramsey's theorem). Recall that Hindman's theorem~\cite[Theorem 3.1]{hindmanthm}, in one of its equivalent forms (the equivalence, which is explicitly pointed out in~\cite[p. 384]{baumgartner}, can be seen via the identification of each natural number with the finite set of places where its binary expansion has a nonzero digit), states that whenever we partition the set $[\omega]^{<\omega}$, consisting of all finite subsets of $\omega$, in two cells, $[\omega]^{<\omega}=A_0\cup A_1$, there exists an infinite pairwise disjoint family $X\subseteq[\omega]^{<\omega}$ such that the set
\begin{equation*}
\fu(X)=\left\{\bigcup_{x\in F}x\bigg|F\in[X]^{<\omega}\setminus\{\varnothing\}\right\}
\end{equation*}
is contained in one single $A_i$. Union ultrafilters were first introduced by Blass~\cite[p. 92]{blass-union}\footnote{A close relative of union ultrafilters, known as \textit{strongly summable ultrafilters}, were introduced at around the same time by Hindman~\cite[Definition 2.1]{hindman-obscure}, who had actually already inadvertently proved that their existence follows from the continuum hypothesis $\ch$ earlier, in~\cite[Theorem 3.3]{hindman-existence}. On the other hand, Matet~\cite{matet} arrived independently to an equivalent concept, in terms of what he called \textit{filters of partitions}. Specifically, he established several facts about objects that he calls \textit{Hindman filters} (union ultrafilters) and \textit{Milliken-Taylor filters} (stable ordered union ultrafilters).}, and have been studied by several people afterwards, including the first author of this paper~\cite{blass-near-coherent,blasshindman,eisworth,tesis,yo-canadian,jurisetal,krautz-union}.

\begin{defin}
An ultrafilter $u$ on $[\omega]^{<\omega}$ is said to be a \textbf{union ultrafilter} if, for every $A\in u$, there exists an infinite, pairwise disjoint family $X\subseteq[\omega]^{<\omega}$ such that $A\subseteq\fu(X)\ni u$.
\end{defin}

One of the most interesting questions regarding union ultrafilters, is that of their existence. The existence of union ultrafilters\footnote{And, consequently, also the existence of strongly summable ultrafilters, by~\cite[Theorem 1]{blasshindman}.} follows from the Continuum Hypothesis, abbreviated $\ch$ (i.e. the statement that the set of real numbers is in bijection with the first uncountable cardinal, or $|\mathbb R|=\omega_1$) by~\cite[Theorem 3.3]{hindman-existence}. On the other hand, Blass and Hindman proved~\cite[Theorem 2]{blasshindman} that the existence of a union ultrafilter implies the existence of a P-point, and hence cannot be established within the $\zfc$ axiomatic framework, by Shelah's result on the independence of the existence of P-points from the $\zfc$ axioms\footnote{Quite recently, a much simpler proof of the consistency of no P-points has been found by Chodounsk\'y and Guzm\'an~\cite{david-osvaldo}, who proved that there are no P-points in the iterated nor in the side-by-side Silver model.}~\cite[Theorem 6.5]{wimmers}. These existential results turn out to be relevant to the study of extremally disconnected topological groups, since union ultrafilters naturally give rise to such groups. Specifically, if $u$ is a union ultrafilter, then declaring $\{A\cup\{\varnothing\}\big|A\in u\}$ to be the neighborhood filter of the identity element $\varnothing$ yields an extremally disconnected (even maximal) group topology on the Boolean group $[\omega]^{<\omega}$, and hence provides consistent examples of these kinds of groups~\cite{arkhangelskii,malykhin,protasov,sipacheva,ariet}.

From the set-theoretic viewpoint, it is relevant to determine which, from among the many models of Set Theory that can be obtained by forcing, satisfy the statement asserting the existence of union ultrafilters. For example, Eisworth~\cite[Theorem 9]{eisworth} established that the existence of union ultrafilters follows from the cardinal invariant equality $\covm=\mathfrak c$ (which is equivalent to Martin's Axiom restricted to countable forcing notions), and consequently all models satisfying such equality (this includes Cohen's classical model, as well as all models obtained by finite support iterations of ccc forcing notions whose length equals $\mathfrak c$ in the final extension) will satisfy such an existential statement. The first author has constructed~\cite[Section 4.3]{tesis} models of $\zfc$ that contain union ultrafilters, even though $\covm<\mathfrak c$ in those models. On the other hand, there are also results establishing that the existence of union ultrafilters implies the existence of other set-theoretic objects such as semiselective ultrafilters~\cite[Proposition 7.5]{matet}, or pairs of non-near-coherent ultrafilters~\cite[Theorem 38]{blass-near-coherent}, which allow us to conclude that many of the better-studied \textit{canonical models}\footnote{Here, a \textit{canonical model} is a model of $\zfc$ obtained by means of a countable support iteration of proper Borel forcing notions.} do not satisfy the existence of union ultrafilters: Miller's model because it satisfies the Near-Coherence of Filters principle~\cite[Theorem, p. 530]{blass-ncf}, Laver's and Mathias's models because in these models, rapid ultrafilters do not exist~\cite[Theorem 5; Remark (1) on p. 106]{miller}, or the iterated Silver model, because by the recent result of~\cite{david-osvaldo}, this model does not have P-points. Another model (though not canonical) that has been studied extensively is the Random model, where union ultrafilters cannot exist either, because this model does not have semiselective ultrafilters~\cite[Theorem 5.1]{kunen} (that is, rapid P-points).

The very interesting question of whether the iterated Sacks model (obtained by a countable support iteration of perfect set forcings) satisfies such an existential statement is not immediately answered with these sorts of considerations, and was actually explicitly asked by the first author~\cite[Question 4.11, p. 106]{tesis}. We have been able to answer this question in the affirmative by using the theory of parametrized diamond principles, applied to a particular cardinal characteristic of the continuum, and this is precisely the main result of this paper. It should be noted that a result of Zheng~\cite[Theorem 0.3]{y2zheng}, obtained independently and at around the same time, provides an alternative proof for the main result of this paper.\footnote{Strictly speaking, Zheng proves that there are union ultrafilters in the \textit{side-by-side} Sacks model, although it seems plausible that her argument might be suitably modified to ensure the existence of these ultrafilters in the iterated Sacks model too. In any case, the proof in the present paper is substantially different from that of Zheng's, and arguably interesting in its own right.}

In Section 2 we define the cardinal invariant that we will use, along with its parametrized diamond principle, and show that said cardinal invariant is small in the iterated Sacks model, which in turn implies that the corresponding diamond principle holds in the iterated Sacks model. Then in Section 3 we prove that the aforementioned parametrized diamond principle implies the existence of a union ultrafilter.

\section{A cardinal invariant, its parametrized diamond principle, and the iterated Sacks model}

In this section we will start by considering the following cardinal characteristic of the continuum, whose definition arises directly from Hindman's theorem. Our notation for this invariant follows that of~\cite{blasscardinv}.

\begin{defin}
 The cardinal invariant $\homh$ is defined to be the least cardinality of a family $\mathcal X$ 
 of infinite ordered\footnote{A family $X\subseteq[\omega]^{<\omega}$ is said to be \textit{ordered} if, for every $x,y\in X$, either $\max(x)<\min(y)$ or $\max(y)<\min(x)$. Ordered families are sometimes called \textit{block sequences}. Throughout this paper, we will use both terminologies indistinctly.} 
 subfamilies of $[\omega]^{<\omega}$ such that for every colouring 
 of $[\omega]^{<\omega}$ into finitely many colours, there exists an $X\in\mathcal X$ such 
 that $\fu(X)$ is monochromatic.
\end{defin}

Since within every set of the form $\fu(X)$ we can find infinite ordered subfamilies, in the 
above definition we could have dropped the requirement that every element of $\mathcal X$ be ordered, 
and we would still have obtained an equivalent cardinal. In fact, by using symmetric differences 
instead of unions (note that in the case of a pairwise disjoint family the two approaches are exactly the 
same), we could even have dropped the requirement that every element of $\mathcal X$ consists 
of pairwise disjoint sets, and we would still have obtained the exact same cardinal invariant.

In~\cite{weakdiamond}, a general theory of weak diamond principles is studied. 
The cardinal invariants considered there are defined in terms of triples $(A,B,E)$ 
where $|A|\leq\mathfrak c$, 
$|B|\leq\mathfrak c$ and $E\subseteq A\times B$, satisfying that 
$(\forall a\in A)(\exists b\in B)(a\ E\ b)$ and that 
$(\forall b\in B)(\exists a\in A)\neg(a\ E\ b)$ (the reason for the 
last two requirements is just so we can ensure the existence and nontriviality 
of the corresponding 
cardinal invariant). The cardinal invariant associated to such a triple 
$(A,B,E)$ (sometimes referred to as the \textit{evaluation} of the triple), 
which we denote by $\langle A,B,E\rangle$, is given by
\begin{equation*}
\langle A,B,E\rangle=\min\{|X|\big|X\subseteq B\wedge(\forall a\in A)(\exists b\in X)(a\ E\ b)\}.
\end{equation*}

We are especially interested in cardinal invariants  $\langle A,B,E\rangle$ that are Borel, 
i.e. all entries of the triple $(A,B,E)$, can be viewed as Borel subsets of some Polish space. For 
a cardinal invariant of this type, the \emph{parametrized diamond principle} of the triple is the statement 
that for every Borel function 
$F:2^{<\omega_1}\longrightarrow A$ (where Borel just means that for every $\alpha<\omega_1$ 
the restriction $F\upharpoonright2^{<\alpha}$ is a Borel function) there exists a 
$g:\omega_1\longrightarrow B$ such that for every $f:\omega_1\longrightarrow 2$, the set
\begin{equation*}
\{\alpha<\omega_1\big|F(f\upharpoonright\alpha)\ E\ g(\alpha)\}
\end{equation*}
is stationary. This 
statement is denoted by $\diamondsuit(A,B,E)$. The fundamental theorem regarding these 
parametrized diamond principles is the following:

\begin{thm}[\cite{weakdiamond}, Theorem 6.6]\label{fundamental}
 Let $(A,B,E)$ be a Borel triple defining a cardinal invariant. Let 
 $\langle\mathring{\mathbb Q_\alpha}\big|\alphaÂ­<\omega_2\rangle$ be a sequence 
 of (names for) Borel forcing notions that satisfy 
$\Vdash_\alpha``\mathring{\mathbb Q_\alpha}\cong\check{2^+}\times\mathring{\mathbb Q_\alpha}"$ (here $2^+$ denotes the partial order that consists of two incomparable elements, plus one maximal element $\mathbb 1$ above them), 
and assume that the countable support iteration 
 $\mathbb P_{\omega_2}$ of these forcing notions is proper. Then 
 \begin{equation*}
  \mathbb P_{\omega_2}\Vdash``\diamondsuit(A,B,E)"\text{ if and only if 
 }\mathbb P_{\omega_2}\Vdash``\langle A,B,E\rangle\leq\omega_1".
 \end{equation*}
\end{thm}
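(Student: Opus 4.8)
The plan is to prove the two implications separately, observing that only the direction from the diamond principle to the bound on the invariant is a theorem of $\zfc$, while the converse is where the structural hypotheses on the iteration do all the work.

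I would establish the implication $\mathbb P_{\omega_2}\Vdash``\diamondsuit(A,B,E)"\Rightarrow\mathbb P_{\omega_2}\Vdash``\langle A,B,E\rangle\leq\omega_1"$ outright in $\zfc$, making no reference to the iteration. Fix a Borel surjection $c\colon 2^\omega\to A$ (available since $A$ is a nonempty Borel subset of a Polish space) and define $F\colon 2^{<\omega_1}\to A$ by $F(s)=c(s\upharpoonright\omega)$; this is Borel on each $2^{<\alpha}$. Feeding $F$ into $\diamondsuit(A,B,E)$ produces $g\colon\omega_1\to B$, and I claim $X:=\{g(\alpha)\mid\alpha<\omega_1\}$ is an $E$-dominating family of size $\leq\omega_1$, witnessing $\langle A,B,E\rangle\leq\omega_1$. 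Indeed, given $a\in A$ choose $r\in 2^\omega$ with $c(r)=a$ and extend $r$ to any $f\colon\omega_1\to2$; then $F(f\upharpoonright\alpha)=a$ for all $\alpha\geq\omega$, so the guessing set $\{\alpha\mid F(f\upharpoonright\alpha)\ E\ g(\alpha)\}$, being stationary and hence nonempty, yields some $\alpha$ with $a\ E\ g(\alpha)\in X$.

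The substantive direction is $\langle A,B,E\rangle\leq\omega_1\Rightarrow\diamondsuit(A,B,E)$ in the extension, and this is where the factoring hypothesis $\mathbb Q_\alpha\cong2^+\times\mathbb Q_\alpha$ and properness are indispensable. Working in $V[G]$ I fix an $E$-dominating family $\{b_\xi\mid\xi<\omega_1\}\subseteq B$ and an arbitrary Borel $F\colon2^{<\omega_1}\to A$, and I must produce a single $g\colon\omega_1\to B$ guessing $F$ along every $f$. Under the standing assumption (as in~\cite{weakdiamond}) that the ground model satisfies $\ch$, the iteration $\mathbb P_{\omega_2}$ is $\aleph_2$-cc and proper, so every such $F$, every $f\colon\omega_1\to2$, and every name for a club subset of $\omega_1$ is captured by an initial segment $\mathbb P_\beta$ with $\beta<\omega_2$; I would use this reflection to reduce the global guessing statement to a stagewise one. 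The crucial point is that forcing with $2^+$ adds a single \emph{unpredictable} bit: below any condition both values remain possible, which is precisely the Devlin--Shelah weak-diamond atom. The factoring guarantees that such a fresh bit splits off at every stage, hence remains generic over everything added so far. From these bits I would assemble, cofinally in the iteration, Cohen-generic branches $f_{\mathrm{gen}}\in2^{\omega_1}$, evaluate $F$ along them, and, using $E$-domination, set $g(\alpha):=b_{\xi(\alpha)}$ where $\xi(\alpha)$ is chosen so that $F(f_{\mathrm{gen}}\upharpoonright\alpha)\ E\ b_{\xi(\alpha)}$. The verification is then a trapping argument: given $f$ and a club $\dot C$, both caught at some stage $\beta$, I would combine a master condition from properness with the unpredictability of the $2^+$-bits available \emph{after} stage $\beta$ to force, at a club of candidate levels $\alpha$, agreement of $f\upharpoonright\alpha$ with the generic branch at a point of $\dot C$, thereby placing $\alpha$ in the guessing set.

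The main obstacle, and the technical heart of the theorem, is making this trapping work \emph{uniformly for all} $f\colon\omega_1\to2$ simultaneously --- including those added at arbitrarily late stages of the $\omega_2$-length iteration --- so that one $g$ guesses $F$ along every branch on a stationary set. This is exactly what the hypothesis $\mathbb Q_\alpha\cong2^+\times\mathbb Q_\alpha$ is tailored to deliver: because a fresh, unpredictable bit reappears at every stage, no $f$ can evade the guesses already committed to, and the genericity of these bits, marshalled through a properness and elementary-submodel argument, upgrades pointwise agreement into stationarity. Everything else --- the Borel coding, the reflection supplied by the chain condition, and the bookkeeping of the $E$-dominating family --- I expect to be routine once this core genericity-versus-evasion step is in place.
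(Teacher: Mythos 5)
First, a point of order: the paper does not prove this statement at all --- it is quoted, with citation, from Moore--Hru\v{s}\'ak--D\v{z}amonja \cite{weakdiamond}, so there is no internal proof to compare yours against; I am judging your proposal against the argument in that source. Your first direction ($\diamondsuit(A,B,E)\Rightarrow\langle A,B,E\rangle\leq\omega_1$ in the extension) is correct, is indeed a pure $\zfc$ implication, and matches the standard observation in \cite{weakdiamond}; the only blemish is trivial ($F(s)=c(s\upharpoonright\omega)$ is undefined for finite $s$; set those values arbitrarily).

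The hard direction, however, has a genuine gap exactly at what you correctly identify as the heart of the theorem, and the mechanism you propose cannot be repaired as stated. (i) Your trapping step asks to force ``agreement of $f\upharpoonright\alpha$ with the generic branch'' $f_{\mathrm{gen}}$ at club-many $\alpha$: this is impossible, since if $f(0)\neq f_{\mathrm{gen}}(0)$ then $f\upharpoonright\alpha\neq f_{\mathrm{gen}}\upharpoonright\alpha$ for \emph{every} $\alpha\geq1$; two distinct branches never agree on initial segments past their first disagreement, so no density argument can place such $\alpha$'s in the guessing set. (ii) The genericity you invoke runs in the wrong direction for the quantifier order $\exists g\,\forall f$: by the $\aleph_2$-cc, any legitimate $g$ (an $\omega_1$-sequence of reals) is captured in some intermediate model $V[G_\gamma]$, so the bits used to build it are fully visible to every $f$ added after stage $\gamma$; you cannot, for each later $f$, use bits ``available after the stage where $f$ is caught,'' because $g$ is not allowed to depend on $f$. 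The proof in \cite{weakdiamond} resolves this tension by arguing with names and density rather than with objects of the extension: given a condition $p$ and names $\dot f$, $\dot C$ witnessing a putative failure, one takes a countable $M\prec H(\theta)$ containing them, sets $\alpha=M\cap\omega_1$, and chooses an $(M,\mathbb P_{\omega_2})$-generic $q\leq p$ with support inside $M$; then $q$ forces $\alpha\in\dot C$ and forces $\dot f\upharpoonright\alpha$ to be computed from the $M$-part of the generic, while the $2^+$-coordinates \emph{outside} $M$ remain completely undecided below $q$. Since the witness family $\{b_\xi\mid\xi<\omega_1\}$ works in the final model, some $r\leq q$ forces $F(\dot f\upharpoonright\alpha)\mathrel{E}b_{\xi}$ for a concrete $\xi$, and one then uses the atom-swapping automorphisms of the $2^+$ factors (which fix $q$ and all $M$-based names) plus a further extension to set those free bits so that they code $\xi$, i.e.\ to force $g(\alpha)=b_\xi$; this contradicts what $p$ forces. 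That is the actual role of the hypothesis $\mathbb Q_\alpha\cong2^+\times\mathbb Q_\alpha$ --- reserving at every coordinate a bit that a master condition can leave undecided and that homogeneity lets one re-route retroactively --- not the manufacture of generic branches of $2^{\omega_1}$ for $f$ to agree with. Your sketch assembles the right ingredients (master conditions, $\aleph_2$-cc reflection, the witness family, unpredictable bits) but lacks this name-based density-plus-automorphism engine, and no amount of bookkeeping will make the branch-agreement version go through.
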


All of the above is relevant to our cardinal invariant $\homh$ mainly because its definition can be thought of as given by
\begin{equation*}
\homh=\langle\wp([\omega]^{<\omega}),\{X\in\wp([\omega]^{<\omega})\big|X\text{ is a block sequence}\},\text{is }\fu\text{-reaped by}\rangle
\end{equation*}
where a set $A$ is $\fu$-reaped by a block sequence $X$ if either $\fu(X)\subseteq A$, or $\fu(X)\cap A=\varnothing$ 
(i.e. if $\fu(X)$ is homogeneous for the colouring $\{A,[\omega]^{<\omega}\setminus A\}$). So 
in this context, the combinatorial principle 
$\diamondsuit(\homh)$ is the statement that for every function 
$F:2^{<\omega_1}\longrightarrow\wp([\omega]^{<\omega})$, there exists an 
$\omega_1$-sequence $g$ of block sequences such that, for every $f:\omega_1\longrightarrow2$, 
$F(f\upharpoonright\alpha)$ is $\fu$-reaped by $g(\alpha)$  (i.e. $\fu(g(\alpha))$ reaps 
$F(f\upharpoonright\alpha)$) for stationarily many $\alpha<\omega_1$. And in order to see that such a principle holds in a forcing extension (such as the one obtained by iterating  Sacks forcing) obtained by a countable support iteration of proper forcing notions, it will suffice to establish that in such extension the cardinal $\homh$ equals $\omega_1$. So our next immediate goal is to establish that $\homh=\omega_1$ in the iterated Sacks model, and in order to attain this goal, we now proceed to state some definitions and results from~\cite{forcingidealized}, which will be used for this purpose.

\begin{defin}[\cite{forcingidealized}, Def. 6.1.9]
A cardinal invariant $\mathfrak x$ is said to be \textbf{very tame} if its definition is of the form
\begin{equation*}
\mathfrak x=\min\{|X|\big|\phi(X)\wedge(\forall x\in\mathbb R)(\exists y\in X)(\theta(x,y))\},
\end{equation*}
where
\begin{itemize}
\item $\phi(X)$ is a formula of the form 
\begin{equation*}
(\forall x_0,\ldots,x_k\in X)(\exists y_0,\ldots,y_l\in X)(\psi(x_0,\ldots,x_k,y_0,\ldots,y_l))
\end{equation*}
for some arithmetic formula\footnote{An arithmetic formula is one all of whose quantifiers are bounded by $\omega$, or in other words, all of its quantifiers range only over natural numbers.} $\psi$;
\item $\theta(x,y)$ is an analytic\footnote{An analytic formula is one that consists of a quantifier ranging over real numbers (or over subsets of $\omega$, or any other equivalent object) followed by an arithmetic formula.} formula;
\item $\zfc\vdash(\forall X\subseteq\mathbb R)((|X|=\omega\wedge\phi(X))\Rightarrow(\exists Z)(X\subseteq Z\wedge|Z|=\mathfrak x\wedge Z\text{ witnesses }\mathfrak x))$.
\end{itemize}
\end{defin}

\begin{lem}\label{homhistame}
$\homh$ is very tame.
\end{lem}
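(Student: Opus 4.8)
The plan is to verify that $\homh$ matches the three-part definition of \emph{very tame} by exhibiting suitable formulas $\phi$, $\theta$, and $\psi$, and then discharging the $\zfc$-provable closure condition. Recall from the earlier discussion that we may work with the reformulation
\begin{equation*}
\homh=\langle\wp([\omega]^{<\omega}),\{X\mid X\text{ is a block sequence}\},\text{is }\fu\text{-reaped by}\rangle,
\end{equation*}
and that we are free to drop the requirement that the members of the witnessing family be ordered or pairwise disjoint (passing to symmetric differences if necessary), since this yields the same cardinal. The natural choice is to identify the universe $\mathbb R$ with $\wp([\omega]^{<\omega})$ (via a fixed recursive bijection between $\omega$ and $[\omega]^{<\omega}$), to let $\theta(x,y)$ assert that the set coded by $x$ is $\fu$-reaped by the block sequence coded by $y$, i.e. that $\fu(y)\subseteq x$ or $\fu(y)\cap x=\varnothing$, and to let $\phi(X)$ assert that every member of $X$ is an infinite block sequence.

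First I would check the complexity of $\theta(x,y)$. The assertion ``$\fu(y)\subseteq x$'' unwinds to $(\forall F\in[y]^{<\omega}\setminus\{\varnothing\})(\bigcup_{z\in F}z\in x)$, and since the membership relation and the finite-union operation are all arithmetic once reals are viewed as subsets of $\omega$, the whole statement $\fu(y)\subseteq x$ is arithmetic; likewise $\fu(y)\cap x=\varnothing$ is arithmetic. Their disjunction is therefore arithmetic, so in particular $\theta$ is analytic (indeed we do not even need the leading real quantifier), satisfying the second clause comfortably. Next I would check $\phi(X)$: the property of being an infinite block sequence is itself arithmetic in a single real, so $\phi(X)=(\forall x\in X)(x\text{ is an infinite block sequence})$ has the required shape, with $k=0$, $l=0$, and $\psi$ the arithmetic predicate ``$x$ is an infinite block sequence'' (or, reading it with a dummy existential witness, one can trivially fit it into the prescribed $(\forall x_0\ldots)(\exists y_0\ldots)\psi$ template). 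This dispatches the first two bullet points.

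The main obstacle, and the only step requiring a genuine idea, is the third clause: I must show that $\zfc$ proves that every \emph{countable} family $X$ of infinite block sequences with $\phi(X)$ extends to a full witness $Z\supseteq X$ with $|Z|=\homh$ that actually witnesses $\homh$. The point here is that the countability of $X$ makes this nearly automatic: given any witnessing family $Z_0$ of size $\homh$ (which exists by the very definition of the cardinal), the family $Z=Z_0\cup X$ still has cardinality $\homh$ (since $\homh\geq\omega_1>\omega=|X|$, because $\homh$ bounds a reaping-type invariant from below and is uncountable), still satisfies $\phi$ (as $\phi$ is preserved under unions—each member of $Z$ is individually an infinite block sequence), and still witnesses $\homh$ (since adjoining extra block sequences only enlarges the supply of candidate $y$'s in the defining condition $(\forall a)(\exists b\in Z)(a\ E\ b)$). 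Thus $Z$ is the desired superset. The only subtlety to confirm is that $\homh$ is genuinely uncountable, so that adding countably many sets does not increase its cardinality; this follows because for any countable family of block sequences one can diagonalize to produce a two-colouring of $[\omega]^{<\omega}$ for which no $\fu(X)$ is monochromatic, a standard argument that I would include briefly. With all three clauses verified, $\homh$ is very tame.
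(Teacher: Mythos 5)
Your proof is correct and follows essentially the same route as the paper's: the same identification of $\wp([\omega]^{<\omega})$ with $\mathbb R$ via an arithmetic coding, the same complexity checks showing $\phi$ fits the prescribed template and $\theta$ is (in fact arithmetic, hence) analytic, and the same closure argument for the third clause. The only difference is one of detail: where the paper dismisses the extension property as ``trivial,'' you spell it out (union a countable $\phi$-family with a minimal witness, using uncountability of $\homh$ obtained by diagonalizing a colouring against countably many block sequences), which is a welcome elaboration rather than a different approach.
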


\begin{proof}
We start by canonically identifying $\omega$ with $[\omega]^{­<\omega}$ (during the proof, at some point we will need such an identification to be expressible by a $\Delta_0^0$ formula, which can be done, for example, by means of binary expansions), so that we have a corresponding coding of $\wp([\omega]^{<\omega})$ by means of $\mathbb R$. Thus, we just need to find a formula $\phi(\mathscr X)$ of the form $(\forall x_0,\ldots,x_k\in\mathscr X)(\exists y_0,\ldots,y_l\in\mathscr X)(\psi(x_0,\ldots,x_k,y_0,\ldots,y_l))$ for some arithmetic $\psi$; as well as an analytic formula $\theta$, such that
\begin{equation*}
\homh=\min\{|\mathscr X|\big|\phi(\mathscr X)\wedge(\forall A\in\wp([\omega]^{­<\omega}))(\exists X\in\mathscr X)(\theta(A,X))\},
\end{equation*}
and
\begin{align*}
\zfc\vdash & (\forall\mathscr X\subseteq\wp([\omega]^{<\omega}))((|\mathscr X|=\omega\wedge\phi(\mathscr X))\Rightarrow \\
 & (\exists\mathscr Z)(\mathscr X\subseteq\mathscr Z\wedge|\mathscr Z|=\homh\wedge\mathscr Z\text{ witnesses }\homh)).
\end{align*}

Since $\homh$ is given by
\begin{align*}
\homh=\min\{|\mathscr X|\big| & \mathscr X\text{ consists of infinite block sequences}\wedge \\ 
 & (\forall A\in\wp([\omega]^{<\omega}))(\exists X\in\mathscr X)(X\text{ }\fu\text{-reaps }A)\},
\end{align*}
it suffices to show that the formula $\phi(\mathscr X)$ expressing that $\mathscr X$ is a family of infinite block sequences, as well as the formula $\theta(A,X)$ expressing that $A$ is $\fu$-reaped by $X$, are as required.

We start by looking at the formula $\psi(X)$ which expresses that $X$ is an infinite block sequence. Such a formula, when written down explicitly, reads as follows:
\begin{align*}
 & (\forall x\in X)(\forall y\in X)(\max(x)<\min(y)\vee\max(y)<\min(x)) \\
 \wedge & (\forall x\in X)(\exists y\in X)(\max(x)<\min(y)).
\end{align*}
Since the quantifiers of this formula range only over elements of $[\omega]^{­<\omega}$ (which are just natural numbers according to our coding), the formula is arithmetic. Now, the formula $\phi(\mathscr X)$ expressing that $\mathscr X$ is a family of infinite block sequences is simply given by $(\forall X\in\mathscr X)(\psi(X))$; so this allows us to conclude that $\phi(\mathscr X)$ is as required.

Secondly, the formula $\theta(A,X)$ must be given by
\begin{equation*}
(\forall F\in[X]^{<\omega})(\bigcup F\in A)\vee(\forall F\in[X]^{<\omega})(\bigcup F\notin A),
\end{equation*}
which is clearly analytic, as it contains only one quantifier (and the definition of $\bigcup F$ can be carried out by means of a $\Delta_0^0$ formula).

Finally, it is trivial that every countable family $\mathscr X$ of infinite block sequences can be extended to a witness of $\homh$ with cardinality $\homh$. Thus all of the requirements for $\homh$ to be a very tame cardinal invariant are satisfied, and we are done.
\end{proof}

The following Lemma is essentially just~\cite[Theorem 6.1.16]{forcingidealized}, appropriately reformulated so that it is readily applicable in our context. The ``proof'' of the Lemma just consists of linking together the necessary results and definitions from~\cite{forcingidealized}, for the benefit of those readers that are interested in chasing the lemma all the way to first principles.

\begin{lem}\label{zapletal}
If $\mathfrak x$ is a very tame cardinal invariant such that the inequality $\mathfrak x<\mathfrak c$ holds in some forcing extension, then the iterated Sacks model satisfies that $\omega_1=\mathfrak x<\mathfrak c$.\footnote{In~\cite{zapletal}, a \textbf{tame} cardinal invariant is defined to be one whose definition is of the form
\begin{equation*}
\min\{|Y|\big|Y\subseteq\mathbb R\wedge\phi(Y)\wedge\psi(Y)\},
\end{equation*}
where the quantifiers of $\phi$ are restricted to the set $Y$ or to $\omega$, and $\psi(Y)$ is a sentence of the form $(\forall x\in\mathbb R)(\exists y\in Y)(\theta(x,y))$, with $\theta$ a formula whose quantifiers range over natural and real numbers only, without mentioning the set $Y$ (both formulas can have a real parameter). In that same paper~\cite[Theorem 0.2]{zapletal} it is established that, if there exists a proper class of measurable Woodin cardinals, then every tame cardinal invariant that can be made $<\mathfrak c$ in some forcing extension will be $<\mathfrak c$ in the iterated Sacks extension. So for the reader who feels entirely comfortable with large cardinal assumptions, much work can be saved by simply verifying that $\homh$ satisfies the definition of a tame cardinal invariant (much simpler than that of a very tame invariant) and then appealing to the aforementioned theorem to conclude that $\homh<\mathfrak c$ in the iterated Sacks model.
}
\end{lem}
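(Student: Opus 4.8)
The plan is to recognize this lemma as a direct reformulation of \cite[Theorem 6.1.16]{forcingidealized}, so that the work is to match hypotheses and transcribe the conclusion into our language rather than to reprove anything from scratch. Concretely, I would first check that our notion of a \emph{very tame} invariant coincides with (or is a special case of) the hypothesis under which Zapletal's theorem is stated, verifying that the three syntactic conditions in our definition---the arithmetic form of $\phi$, the analytic form of $\theta$, and the $\zfc$-provable extendibility of countable families to witnesses---are precisely the ingredients his argument consumes. Once the dictionary is fixed, the theorem delivers that, for any such $\mathfrak x$ whose smallness $\mathfrak x<\mathfrak c$ is consistent, the countable support iteration of Sacks forcing of length $\omega_2$ over a model of $\ch$ forces $\mathfrak x=\omega_1<\mathfrak c$, which is exactly the statement of the lemma.

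It is worth recording the mechanism that makes this work, since the ``proof'' is really a chase through \cite{forcingidealized}. The lower bound $\mathfrak x\geq\omega_1$ is automatic from nontriviality, so all the content lies in the upper bound $\mathfrak x\leq\omega_1$. The witness of size $\omega_1$ is the set of $\omega_1$ reals of the $\ch$ ground model. That this set satisfies the defining formula $\phi$ is guaranteed by the third clause of very tameness, which lets one enlarge a countable piece to a genuine witness in an absolute fashion, while the arithmetic character of $\phi$ ensures this property survives passage to the extension. The real issue is the quantifier $(\forall x\in\mathbb R)(\exists y)(\theta(x,y))$: one must show that every real appearing anywhere along the $\omega_2$-iteration is $\theta$-related to some ground-model $y$, i.e. that no Sacks-generic real escapes all of the old witnesses.

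The main obstacle, and the part that genuinely lives in \cite{forcingidealized}, is exactly this preservation statement for the countable support iteration. It rests on the special structure of Sacks forcing---its fusion and minimality, the Sacks property, and continuous reading of names---which together force new reals to be ``caught'' by ground-model witnesses rather than diagonalizing out of them. Here the hypothesis that $\mathfrak x<\mathfrak c$ holds in \emph{some} extension does essential work: combined with the analytic definability of $\theta$ and Shoenfield absoluteness, it certifies that the definable structure witnessing smallness actually exists, so that the iteration has something to preserve; without it one could not rule out the degenerate case $\mathfrak x=\mathfrak c$. I would therefore present this lemma by assembling the relevant preservation and absoluteness results of \cite{forcingidealized} in the right order, rather than attempting an independent proof, and I would remark (as in the footnote) that a reader comfortable with large cardinals may instead verify the simpler \emph{tame} hypothesis and invoke \cite[Theorem 0.2]{zapletal}, whose proof routes the same conclusion through generic absoluteness from a proper class of measurable Woodin cardinals.
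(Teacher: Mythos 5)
Your proposal correctly identifies the strategy the paper itself uses: Lemma~\ref{zapletal} is proved by citing \cite[Theorem 6.1.16]{forcingidealized} and matching hypotheses. But your write-up omits precisely the hypothesis-matching that constitutes the entire content of the proof. Theorem 6.1.16 is not a statement about Sacks forcing at all: it is a statement about an arbitrary $\sigma$-ideal $I$ that is \emph{iterable} and \emph{$\Pi^1_1$ on $\Sigma^1_1$}, with hypothesis and conclusion phrased in terms of $\cov^*(I)$ and the axiom $\cpa(I)$, not in terms of $\mathfrak c$ and the iterated Sacks model. To derive the lemma one must therefore (i) realize Sacks forcing as the quotient $\mathbb P_I$ of Borel sets modulo the ideal $I$ of countable sets; (ii) observe that $\cov^*(I)=\mathfrak c$ for this ideal, so that ``$\mathfrak x<\cov^*(I)$ in some extension'' and the conclusion ``$\omega_1=\mathfrak x<\cov^*(I)$'' translate into the statements about $\mathfrak c$ appearing in the lemma; (iii) check that the countable support iteration of $\mathbb P_I$ forces $\cpa(I)$ (this is \cite[Proposition 6.1.3]{forcingidealized}); and (iv) verify that the ideal of countable sets really is iterable and $\Pi^1_1$ on $\Sigma^1_1$, which in the paper is done by chasing \cite[29.19 and Exercise 14.13]{kechris} together with \cite[Propositions 2.1.22 and 2.2.2, Example 2.2.5]{forcingidealized}. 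None of steps (i)--(iv) appears in your proposal; your phrase ``assembling the relevant preservation and absoluteness results in the right order'' is a promissory note for exactly this work, and without it the citation of Theorem 6.1.16 does not yield the lemma.

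Moreover, the ``mechanism'' paragraph you supply in place of that verification is a speculative reconstruction that does not match Zapletal's argument. The route in \cite{forcingidealized} goes through the capturing axiom $\cpa(I)$, not through a direct preservation induction showing that ground-model witnesses survive the iteration; in particular, it is not part of the cited proof that ``the witness of size $\omega_1$ is the set of reals of the $\ch$ ground model,'' and Shoenfield absoluteness by itself cannot process a hypothesis of the form ``$\mathfrak x<\mathfrak c$ holds in \emph{some} forcing extension,'' since that quantifies over arbitrary generic extensions rather than being a $\Sigma^1_2$ assertion (this is exactly why Zapletal's framework needs either the very-tameness machinery or, in the tame case, large cardinals). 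Since you rightly do not attempt to reprove Theorem 6.1.16, these inaccuracies do no independent harm, but they cannot substitute for the missing steps (i)--(iv), which is where the lemma actually lives.
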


\begin{proof}
In~\cite[Theorem 6.1.16]{forcingidealized} it is stated that whenever $I$ is an iterable $\Pi_1^1$ on $\Sigma_1^1$ $\sigma$-ideal on some Polish space, and $\mathfrak x$ is a very tame cardinal invariant, if $\mathfrak x<\cov^*(I)$ holds in some inner model of $\zfc$ containing all ordinals or in a generic extension of such an inner model, then $\omega_1=\mathfrak x<\cov^*(I)$ holds in every generic extension of every inner model of $\zfc$ containing all ordinals, provided that such an extension satisfies $\cpa(I)$. Since Sacks forcing is forcing equivalent to the quotient of all Borel subsets of $2^\omega$ modulo the ideal $I$ of countable sets, and $\cov^*(I)=\mathfrak c$ (as $\cov^*(I)$ is defined to be the least number of elements of $I$ required to cover an $I$-positive Borel set), it follows that, whenever a very tame cardinal invariant $\mathfrak x$ satisfies $\mathfrak x<\mathfrak c$ in some forcing extension, it will necessarily be the case that $\omega_1=\mathfrak x<\mathfrak c$ in the iterated Sacks model, as long as this model satisfies $\cpa(I)$ and the ideal $I$ is iterable $\Pi_1^1$ on $\Sigma_1^1$. By~\cite[Proposition 6.1.3]{forcingidealized}, under the hypothesized conditions, $\cpa(I)$ will hold in the generic extension obtained by CS iteration of the forcing $\mathbb P_I$, which is just the iterated Sacks model. Hence it only remains to see that the ideal $I$ of countable sets is iterable $\Pi_1^1$ on $\Sigma_1^1$. The latter property follows directly from~\cite[29.19, p. 231]{kechris} (see~\cite[Definition 3.8.1]{forcingidealized} for the definition of $\Pi_1^1$ on $\Sigma_1^1$). As for iterable, the reader is urged to look at~\cite[Definition 5.1.3]{forcingidealized} for the definition, which has three clauses, the first of which is satisfied by every $\Pi_1^1$ on $\Sigma_1^1$ ideal by~\cite[Proposition 2.1.22]{forcingidealized}. Now, that our ideal $I$ of countable sets satisfies the second clause (defined by~\cite[Definition 3.9.21]{forcingidealized}) follows from~\cite[Exercise 14.13, p. 88]{kechris}; and it moreover satisfies the third clause as well by~\cite[Proposition 2.2.2]{forcingidealized} and/or by~\cite[Example 2.2.5]{forcingidealized}.
\end{proof}

\begin{thm}\label{homhsmallinsacks}
In the iterated Sacks model, the equality $\homh=\omega_1$ holds.
\end{thm}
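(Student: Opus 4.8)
The plan is to combine the two preceding lemmas and thereby reduce the whole statement to a single consistency fact. By Lemma~\ref{homhistame}, $\homh$ is very tame, so Lemma~\ref{zapletal} applies to it verbatim: in order to conclude that the iterated Sacks model satisfies $\homh=\omega_1$, it suffices to exhibit just \emph{one} forcing extension of the universe in which $\homh<\mathfrak c$ holds, and then let the idealized-forcing machinery transport this downward to $\omega_1$ in the Sacks extension. The entire remaining effort is thus spent producing such an extension. I would stress at the outset that the iterated Sacks model itself cannot be the witnessing extension without circularity, so a genuinely different model is needed.

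First I would record the soft bound that makes the target consistency statement tractable. If $u$ is a union ultrafilter, then a base of $u$ consisting of sets of the form $\fu(X)$ (which exists by the very definition of union ultrafilter, since below any $A\in u$ there is some $\fu(X)\subseteq A$ with $\fu(X)\in u$) is automatically an $\fu$-reaping family: given any colouring of $[\omega]^{<\omega}$ into finitely many colours, exactly one colour class $A$ belongs to $u$, and a basic set $\fu(X)\subseteq A$ is then monochromatic. Hence $\homh\le\chi(u)$, the character of $u$. Consequently it is enough to produce a model in which there is a union ultrafilter of character $\omega_1$ while $\mathfrak c=\omega_2$, for there $\homh\le\omega_1<\mathfrak c$.

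The main obstacle is precisely the construction of this model, i.e.\ the consistency of a union ultrafilter whose character lies strictly below the continuum. I would approach it by starting over a ground model of $\ch$, where Blass's theorem furnishes a (stable ordered) union ultrafilter $u$ with a base $\{\fu(X_\alpha)\mid\alpha<\omega_1\}$, and then running a countable-support iteration of length $\omega_2$ of a suitable proper, real-adding forcing that drives $\mathfrak c$ up to $\omega_2$ \emph{while preserving} $u$, in the sense that the fixed family $\{X_\alpha\mid\alpha<\omega_1\}$ continues to $\fu$-reap every subset of $[\omega]^{<\omega}$ added along the iteration. The delicate point, and the heart of the matter, is this preservation: the naive real-adding forcings destroy it, since a Cohen or random real codes a subset of $[\omega]^{<\omega}$ that is $\fu$-reaped by none of the $X_\alpha$ (this is exactly why neither the Cohen nor the Random model can serve as the witness), so the iterand must be chosen so that a fusion argument shows every newly added subset is decided by one of the ground-model basic sets. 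Alternatively, one can try to bypass the explicit construction altogether by invoking the first author's models~\cite{tesis} containing union ultrafilters together with $\mathfrak c>\omega_1$, provided one verifies that the relevant ultrafilter there has character $\omega_1$.

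Finally, with any such extension in hand, Lemma~\ref{zapletal} immediately delivers $\omega_1=\homh<\mathfrak c$ in the iterated Sacks model, and in particular the desired equality $\homh=\omega_1$. For a reader willing to assume a proper class of measurable Woodin cardinals, the same consistency of $\homh<\mathfrak c$ could instead be fed into the tame-invariant theorem quoted in the footnote to Lemma~\ref{zapletal}, shortening this last step; but the route through Lemma~\ref{zapletal} keeps the argument within $\zfc$.
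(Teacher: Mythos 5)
Your skeleton is exactly the paper's: Lemma~\ref{homhistame} plus Lemma~\ref{zapletal} reduce the theorem to exhibiting a single forcing extension in which $\homh<\mathfrak c$, and your key observation---that a union ultrafilter with a base of sets of the form $\fu(X)$ yields an $\fu$-reaping family, since exactly one colour class of any finite colouring lies in the ultrafilter---is precisely how the paper witnesses that inequality. The difference is in how the witnessing model is produced. What you present as your primary route (a countable support iteration over a $\ch$ ground model of some proper real-adding forcing that preserves a fixed stable ordered-union ultrafilter) is not carried out: you name the preservation property needed but specify neither the iterand nor any fusion argument, and this is a genuinely hard, research-level task rather than a routine step (it is essentially the content of Zheng's independent approach mentioned in the introduction; moreover, if one could prove such preservation for Sacks iterands themselves, the whole parametrized-diamond machinery would be unnecessary). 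Had this been your only route, it would be a genuine gap. Your fallback, however, is exactly the paper's proof: it invokes the models of~\cite[Section 4.3, Theorems 4.20 and 4.23]{tesis}, obtained by ``short'' finite support iterations of c.c.c.\ forcings, in which $\covm<\mathfrak c$ and the union ultrafilter constructed there is pointed out to have a basis of $\covm$-many sets of the form $\fu(X)$---so the verification you defer to the reader is already recorded in the cited source, and note that it is slightly less than what you ask for: one does not need character $\omega_1$, only character $<\mathfrak c$ (in those models it is $\covm$), since Lemma~\ref{zapletal} automatically converts any consistent $\homh<\mathfrak c$ into $\homh=\omega_1$ in the iterated Sacks model. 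A last small point: bases of union ultrafilters consist of pairwise disjoint, not necessarily ordered, families, so strictly speaking they are not families of block sequences; this is harmless by the remark following the definition of $\homh$, which notes that dropping the orderedness requirement yields the same cardinal invariant.
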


\begin{proof}
Refer to one of the models from~\cite[Section 4.3, Theorems 4.20 and 4.23]{tesis} (obtained by means of ``short'' finite support iterations of c.c.c. forcing notions), that satisfy $\covm<\mathfrak c$ together with the existence of union ultrafilters. It is pointed out in the construction of such models (as part of the argumentation to conclude that $\covm$ is small in these) that the relevant union ultrafilter has a basis of $\covm$-many sets of the form $\fu(X)$, and hence such basis constitutes a witness for $\homh<\mathfrak c$. Hence by Lemmas~\ref{homhistame} and~\ref{zapletal}, the conclusion is that $\homh=\omega_1$ in the iterated Sacks model.
\end{proof}

\begin{thm}\label{diamondholdsinsacks}
The parametrized diamond principle $\diamondsuit(\homh)$ holds in the iterated Sacks model.
\end{thm}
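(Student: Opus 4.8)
The plan is to obtain $\diamondsuit(\homh)$ as an immediate consequence of the fundamental Theorem~\ref{fundamental} on parametrized diamond principles, once it is combined with the smallness statement of Theorem~\ref{homhsmallinsacks}. Recall that the iterated Sacks model is the generic extension by the countable support iteration $\mathbb P_{\omega_2}$, of length $\omega_2$ over a model of $\ch$, of (names for) Sacks forcing $\mathbb S$. Writing $(A,B,E)$ for the Borel triple $(\wp([\omega]^{<\omega}),\{X:X\text{ is a block sequence}\},\text{is }\fu\text{-reaped by})$ that defines $\homh$, Theorem~\ref{homhsmallinsacks} asserts exactly that $\mathbb P_{\omega_2}\Vdash\langle A,B,E\rangle=\omega_1$, so that in particular $\mathbb P_{\omega_2}\Vdash\langle A,B,E\rangle\leq\omega_1$. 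Hence, provided the structural hypotheses of Theorem~\ref{fundamental} are met, the equivalence stated there yields $\mathbb P_{\omega_2}\Vdash\diamondsuit(A,B,E)$, which is precisely the assertion that $\diamondsuit(\homh)$ holds in the iterated Sacks model. The entire task is therefore to verify those hypotheses.

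First I would confirm that $(A,B,E)$ is genuinely a Borel triple defining a cardinal invariant. Identifying $[\omega]^{<\omega}$ with $\omega$ as in the proof of Lemma~\ref{homhistame}, the first coordinate $A$ is all of $2^\omega$, the set $B$ of block sequences is closed (being cut out by the arithmetic formula isolated there), and the reaping relation $E$ is analytic (by the formula of that same proof, which carries a single real quantifier). The two nontriviality requirements are immediate: Hindman's theorem guarantees that every $A\in\wp([\omega]^{<\omega})$ is $\fu$-reaped by some block sequence, while no single block sequence $X$ reaps, say, a one-element subset of $\fu(X)$, so every $b\in B$ fails to be $E$-related to some $a\in A$.

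Next I would check the forcing hypotheses. Each iterand is Sacks forcing, which is a Borel (indeed definable) forcing notion and is proper; and countable support iterations of proper forcings are proper, so $\mathbb P_{\omega_2}$ is proper. It remains to verify the self-similarity $\mathbb S\cong 2^+\times\mathbb S$. This is the standard homogeneity property of Sacks forcing: given a perfect tree $p$, its first splitting node $s$ determines the two incomparable perfect subtrees $p_0,p_1\leq p$ lying above $s^\frown0$ and $s^\frown1$, each again isomorphic to all of $\mathbb S$, and the passage $p\mapsto(\mathbb 1,p)$, $p_i\mapsto(i,p_i)$ implements the isomorphism with $2^+\times\mathbb S$ on the dense set of conditions that split immediately above their stem. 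I expect this last identity to be the one genuine point of care, since in Theorem~\ref{fundamental} it must hold at the level of the names $\mathring{\mathbb Q_\alpha}$ and be respected along the iteration; but as this is exactly the class of forcings for which~\cite{weakdiamond} is designed, it is a known fact rather than a new computation. With all hypotheses verified, Theorem~\ref{fundamental} and Theorem~\ref{homhsmallinsacks} combine to give $\diamondsuit(\homh)$ in the iterated Sacks model.
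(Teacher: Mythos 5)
Your proposal is correct and takes essentially the same route as the paper: the paper's own proof consists precisely of combining Theorem~\ref{homhsmallinsacks} with Theorem~\ref{fundamental}. The only difference is that you explicitly verify the hypotheses of Theorem~\ref{fundamental} (the Borel triple and its nontriviality via Hindman's theorem, properness of the countable support iteration, and the self-similarity $\mathbb{S}\cong 2^+\times\mathbb{S}$ of Sacks forcing), which the paper compresses into the single phrase ``since this model is obtained by a suitable forcing iteration, Theorem~\ref{fundamental} applies.''
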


\begin{proof}
By Theorem~\ref{homhsmallinsacks}, the cardinal invariant $\homh$ equals $\omega_1$ in the iterated Sacks model. Since this model is obtained by a suitable forcing iteration, Theorem~\ref{fundamental} applies, and we conclude that $\diamondsuit(\homh)$ holds in this model.
\end{proof}

\section{Union ultrafilters from $\diamondsuit(\homh)$}

In the previous section we showed that $\homh=\omega_1$ in the iterated Sacks model. Thus, by Theorem~\ref{fundamental}, 
we have that $\diamondsuit(\homh)$ holds in this model, so our proof that there are union ultrafilters in the iterated Sacks model will be complete once we prove that such an existential statement follows from $\diamondsuit(\homh)$. 

For the convenience of the reader, we state again the combinatorial principle 
$\diamondsuit(\homh)$: this is the statement that for every function 
$F:2^{<\omega_1}\longrightarrow\wp([\omega]^{<\omega})$, there exists an 
$\omega_1$-sequence $g$ of block sequences such that, for every $f:\omega_1\longrightarrow2$, $F(f\upharpoonright\alpha)$ is almost $\fu$-reaped by $g(\alpha)$ (i.e. $\fu(g(\alpha))$ reaps 
$F(f\upharpoonright\alpha)$) for stationarily many $\alpha<\omega_1$.

In order to carry out our construction, we first need a couple of lemmas.

\begin{lem}\label{isomorphism}
Given any block sequence $X\subseteq[\omega]^{<\omega}$, there exists an 
isomorphism $\varphi:\fu(X)\longrightarrow[\omega]^{<\omega}\setminus\{\varnothing\}$ 
(by which we mean: a bijection such that, for any two
$x,y\in\fu(X)$, if $\max(x)<\min(y)$, then $\max(\varphi(x))<\min(\varphi(y))$, and 
furthermore $\varphi(x\cup y)=\varphi(x)\cup\varphi(y)$).
\end{lem}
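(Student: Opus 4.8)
The plan is to exploit the fundamental structural fact about $\fu(X)$ for a block sequence $X$: every element of $\fu(X)$ is a union of a \emph{unique} finite subfamily of $X$. Indeed, since $X$ is ordered (its elements are pairwise disjoint with separated $\min$/$\max$), if we list $X=\{x_0,x_1,x_2,\ldots\}$ in increasing order (meaning $\max(x_n)<\min(x_{n+1})$ for all $n$), then any $u\in\fu(X)$ determines and is determined by the finite nonempty set $S_u=\{n\in\omega\mid x_n\subseteq u\}$, via $u=\bigcup_{n\in S_u}x_n$. This gives a canonical bijection between $\fu(X)$ and $[\omega]^{<\omega}\setminus\{\varnothing\}$ sending $u$ to $S_u$; the map $\varphi$ I want is precisely $u\mapsto S_u$.

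First I would fix the increasing enumeration $X=\{x_n\mid n<\omega\}$ (possible because $X$ is an infinite block sequence, hence order-isomorphic to $\omega$ under the ordering $x<y\iff\max(x)<\min(y)$), and define $\varphi(u)=\{n<\omega\mid x_n\subseteq u\}$ for each $u\in\fu(X)$. Then I would verify the three required properties in turn. For \textbf{bijectivity}: the inverse map sends $F\in[\omega]^{<\omega}\setminus\{\varnothing\}$ to $\bigcup_{n\in F}x_n$, and the disjointness of the $x_n$ guarantees these two maps are mutually inverse (distinct finite index sets yield distinct unions, and every element of $\fu(X)$ arises from some finite index set). For the \textbf{order-preservation}: if $\max(u)<\min(v)$ for $u,v\in\fu(X)$, then every $x_n\subseteq u$ lies entirely below every $x_m\subseteq v$, so $\max S_u<\min S_v$, which is exactly $\max(\varphi(u))<\min(\varphi(v))$. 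For the \textbf{homomorphism property} $\varphi(u\cup v)=\varphi(u)\cup\varphi(v)$: since $x_n\subseteq u\cup v$ iff $x_n\subseteq u$ or $x_n\subseteq v$ (here I use that each $x_n$ is an atom relative to the block structure, so it cannot be split across $u$ and $v$), the index set of $u\cup v$ is the union of the two index sets.

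I do not expect any genuine obstacle here; the statement is essentially a bookkeeping lemma asserting that $\fu(X)$ is a free ``union semigroup'' on countably many ordered generators, naturally isomorphic to the canonical one $\fu(\{\{n\}\mid n<\omega\})=[\omega]^{<\omega}\setminus\{\varnothing\}$. The only point requiring a modicum of care is the claim that $x_n\subseteq u\cup v$ forces $x_n\subseteq u$ or $x_n\subseteq v$; this is where the block-sequence hypothesis is essential, since it is precisely the pairwise disjointness together with the separation of the blocks that prevents a single $x_n$ from being partially absorbed by $u$ and partially by $v$. Once this atomicity observation is in hand, all three clauses follow by direct computation, and I would present them compactly rather than belaboring the elementary set-theoretic identities.
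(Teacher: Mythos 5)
Your proposal is correct and takes essentially the same approach as the paper: the paper fixes the increasing enumeration $X=\{x_n\mid n<\omega\}$ and declares the map $\bigcup_{n\in a}x_n\longmapsto a$ to be the desired isomorphism, which is exactly your $u\mapsto S_u$. The only difference is that the paper leaves the verification to the reader, whereas you spell out bijectivity, order-preservation, and the atomicity point behind the homomorphism property explicitly.
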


\begin{proof}
Since $X$ is a block sequence, it is possible to enumerate its elements 
as $X=\{x_n\big|n<\omega\}$ in such a way that, for every $n<\omega$, 
$\max(x_n)<\min(x_{n+1})$.  Now just notice that the mapping 
$\bigcup_{n\in a}x_n\longmapsto a$ is as required.
\end{proof}

\begin{lem}\label{pseudoint}
Given any sequence $\langle X_n\big|n<\omega\rangle$ of block sequences 
satisfying $(\forall n<\omega)(X_{n+1}\subseteq^*\fu(X_n))$, it is possible to choose, 
in a Borel way, a block sequence $X$ such that, for all $n<\omega$, 
$X\subseteq^*\fu(X)$.
\end{lem}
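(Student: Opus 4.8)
The plan is to realize $X$ as a genuine pseudo-intersection of the sequence $\langle\fu(X_n)\mid n<\omega\rangle$, reading the conclusion as ``$X\subseteq^*\fu(X_n)$ for every $n$''. The only closure fact I will need is that for any block sequence $W$ the family $\fu(W)$ is closed under finite unions, since the union of $\bigcup_{x\in F_1}x$ and $\bigcup_{x\in F_2}x$ is $\bigcup_{x\in F_1\cup F_2}x\in\fu(W)$; in particular, if $W\subseteq\fu(V)$ then $\fu(W)\subseteq\fu(V)$.

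The mathematical heart is a sublemma: for each $k<\omega$ the finite intersection $D_k:=\bigcap_{n\le k}\fu(X_n)$ contains elements of arbitrarily large minimum. To see this, for each $n$ fix a bound $M_n$ such that every element of $X_{n+1}$ lying outside $\fu(X_n)$ is contained in $[0,M_n]$; such a bound exists because $X_{n+1}\subseteq^*\fu(X_n)$ leaves only finitely many exceptional (``bad'') blocks. Put $M=\max\{M_0,\dots,M_{k-1}\}$. I then claim that every $v\in\fu(X_k)$ with $\min(v)>M$ already lies in $D_k$. This I prove by downward induction on $n$ from $k$ to $0$, maintaining the stronger statement that $v$ is a union of $X_n$-blocks each having minimum above $M$: writing such a block $x$ as a union of $X_{n-1}$-blocks, one notes that $x$ cannot be one of the bad blocks of $X_n$ relative to $X_{n-1}$ (those have maximum at most $M_{n-1}<\min(x)$), so $x\in\fu(X_{n-1})$ and its constituent $X_{n-1}$-blocks again have minimum above $M$; closure of $\fu(X_{n-1})$ under finite unions then yields $v\in\fu(X_{n-1})$. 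Since $\fu(X_k)$, being the $\fu$-set of an infinite block sequence, contains elements of arbitrarily large minimum, the same holds for $D_k$.

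With the sublemma in hand I would construct $X=\{y_k\mid k<\omega\}$ by recursion on $k$: having chosen $y_0,\dots,y_{k-1}$, let $y_k$ be the least element (in the fixed enumeration of $[\omega]^{<\omega}$) satisfying $y_k\in D_k$ and $\min(y_k)>\max(y_{k-1})$, which exists by the sublemma. Then $X$ is a block sequence by construction, and for each fixed $n$ every $y_k$ with $k\ge n$ lies in $D_k\subseteq\fu(X_n)$, so $X\subseteq^*\fu(X_n)$, as required.

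For the Borelness clause I would observe that all predicates used are arithmetic in the input reals: for a block sequence $Z$ the relation $y\in\fu(Z)$ is decided by inspecting the finitely many blocks of $Z$ meeting $[0,\max(y)]$ (one checks that $y$ equals the union of those blocks of $Z$ contained in it), and the recursion defining $\langle y_k\mid k<\omega\rangle$ is an $\omega$-length recursion each of whose steps is an unbounded search with an arithmetic matrix. Hence the assignment $\langle X_n\mid n<\omega\rangle\mapsto X$ is Borel. I expect the main obstacle to be the sublemma, specifically the bookkeeping that lets a single threshold $M$ simultaneously clear the bad blocks at all levels $n\le k$; the closure fact and the verification of Borelness are routine.
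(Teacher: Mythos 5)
Your proof is correct and takes essentially the same approach as the paper: a diagonal recursion selecting blocks one at a time, where the paper picks $x_{n+1}$ from a tail of $X_{n+1}$ chosen so that all later blocks of $X_{n+1}$ lie in $\fu(X_i)$ for every $i\le n$ --- which is precisely the content of your sublemma on $D_k=\bigcap_{n\le k}\fu(X_n)$. If anything, your write-up is the more complete of the two, since the paper merely asserts that such a tail (its $k_n$) exists, whereas you actually prove the needed transitivity of almost-inclusion via the bad-block bounds $M_n$ together with the observation that the constituent blocks of any $v\in\fu(X_n)$ are subsets of $v$ and hence inherit its large minimum.
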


\begin{proof}
We will recursively construct $X=\{x_n\big|n<\omega\}$ in such a way that, 
for all $n<\omega$, $\max(x_n)<\min(x_{n+1})$, and furthermore 
$\fu(\{x_k\big|k\geq n\})\subseteq X_n$. We do this as follows: since 
each $X_n$ is a 
block sequence, we can enumerate it as $X_n=\{x(n,k)\big|k<\omega\}$ in 
such a way that $\max(x(n,k))<\min(x(n,k+1))$. Let $x_0=x(0,0)$ and, 
assuming that we know $x_n$, let $k_n$ be (the least number) large enough that 
$\max(x_n)<\min(x(n+1,k_n))$ and that $(\forall k\geq k_n)(\forall i\leq n)(x(n+1,k)\in\fu(X_i))$. Then just let $x_{n+1}=x(n+1,k_n)$. 
Then our sequence $X$, by construction, is a block sequence and satisfies that $(\forall n<\omega)(\{x_k\big|k\geq n\}\subseteq\fu(X_n))$.
\end{proof}

These two lemmas encapsulate the key steps in the proof of the main theorem of this section. Before stating such theorem, we state a couple of definitions.

\begin{defin}
Let $u$ be a union ultrafilter.
\begin{itemize}
\item $u$ is said to be \textbf{stable} if for every countable collection $\{X_n\big|n<\omega\}$ of pairwise disjoint families satisfying $(\forall n<\omega)(\fu(X_n)\in u)$, there exists a pairwise disjoint $X$ such that $\fu(X)\in u$ and $(\forall n<\omega)(X\subseteq^*\fu(X))$.
\item $u$ is said to be an \textbf{ordered-union} ultrafilter if it satisfies the defining condition for union ultrafilters, where the ``pairwise disjoint'' requirement has been strengthened to ``ordered''. That is, $u$ is an ordered-union ultrafilter if $(\forall A\in u)(\exists X\text{ ordered})(A\supseteq\fu(X)\in u)$.
\end{itemize}
\end{defin}

Thus, the property of being a stable ordered-union ultrafilter is stronger than simply being a union ultrafilter. It has been shown by Blass and Hindman~\cite[Theorem 4]{blasshindman} that the existence of union ultrafilters that are not ordered is consistent with the $\zfc$ axioms (in fact, it follows from Martin's Axiom, even when restricted to countable forcing notions, i.e. $\covm=\mathfrak c$). On the other hand, it is still open whether one can have (consistently) union ultrafilters that are not stable. We see below that the principle $\diamondsuit(\homh)$ implies not only the existence of union ultrafilters, but in fact of stable ordered union ultrafilters.

\begin{thm}\label{homhimpliesunion}
 The combinatorial principle $\diamondsuit(\homh)$ implies the existence of 
 stable ordered-union ultrafilters of character $\omega_1$.
\end{thm}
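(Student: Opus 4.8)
The plan is to use $\diamondsuit(\homh)$ to drive a transfinite recursion of length $\omega_1$ that produces a $\subseteq^*$-decreasing tower of block sequences $\langle Y_\alpha\mid\alpha<\omega_1\rangle$ (decreasing in the sense that a tail of $Y_\beta$ is contained in $\fu(Y_\alpha)$ whenever $\alpha<\beta$), and then to let $u$ be the filter generated by $\{\fu(Y_\alpha)\mid\alpha<\omega_1\}$. Granting that this works, three of the four desiderata are essentially immediate: the tower is a base for $u$ of size $\omega_1$, so $u$ has character $\omega_1$; every generator $\fu(Y_\alpha)$ arises from a block sequence, so $u$ is automatically ordered-union; and stability follows from the regularity of $\omega_1$ together with Lemma~\ref{pseudoint}, since given countably many $X_n$ with $\fu(X_n)\in u$ one picks $\alpha_n$ with $\fu(Y_{\alpha_n})\subseteq^*\fu(X_n)$, sets $\beta=\sup_n\alpha_n<\omega_1$, and checks that $X=Y_\beta$ satisfies $\fu(X)\in u$ and $X\subseteq^*\fu(X_n)$ for every $n$. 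The entire weight of the proof therefore rests on using the diamond to guarantee that $u$ is an ultrafilter, that is, that every $A\subseteq[\omega]^{<\omega}$ is decided by some $\fu(Y_\alpha)$.

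The recursion itself is organized as follows. I would start from a fixed block sequence $Y_0$ (say $\{\{n\}\mid n<\omega\}$); at limit stages $\lambda$ I would let $Y_\lambda$ be the block sequence obtained, in a Borel way, from $\langle Y_\alpha\mid\alpha<\lambda\rangle$ via Lemma~\ref{pseudoint} (legitimately, since every $\lambda<\omega_1$ has countable cofinality), which guarantees $Y_\lambda\subseteq^*\fu(Y_\alpha)$ for all $\alpha<\lambda$ and keeps the tower decreasing; and at successor stages I would use the guess supplied by the diamond. Concretely, writing $\varphi_\alpha\colon\fu(Y_\alpha)\to[\omega]^{<\omega}\setminus\{\varnothing\}$ for the canonical isomorphism of Lemma~\ref{isomorphism}, I would set $Y_{\alpha+1}=\varphi_\alpha^{-1}[g(\alpha)]$, where $g$ is the function produced by the diamond. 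Since $\varphi_\alpha$ is a union-isomorphism, $Y_{\alpha+1}$ is a block sequence with $Y_{\alpha+1}\subseteq\fu(Y_\alpha)$ and $\fu(Y_{\alpha+1})=\varphi_\alpha^{-1}[\fu(g(\alpha))]$; this is exactly the algebraic fact that lets a guess $g(\alpha)$ living on the ``abstract'' copy $[\omega]^{<\omega}$ be transported down to a refinement of the current block sequence.

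The heart of the argument is the choice of the Borel function $F\colon 2^{<\omega_1}\to\wp([\omega]^{<\omega})$ to which the diamond is applied. I would fix once and for all a Borel scheme for coding, by an element $s\in 2^{\lambda}$, both a candidate set $A_s\subseteq[\omega]^{<\omega}$ and a candidate initial tower $\langle Y^s_\beta\mid\beta<\lambda\rangle$, arranged so that from $s$ one can Borel-reconstruct the limit term $Y^s_\lambda$ (using Lemma~\ref{pseudoint}) and the isomorphism $\varphi^s_\lambda$ (using Lemma~\ref{isomorphism}). For $\lambda$ a limit ordinal and $s\in 2^\lambda$ coding a legitimate such pair I then define $F(s)=\varphi^s_\lambda\big[A_s\cap\fu(Y^s_\lambda)\big]$, and $F(s)=\varnothing$ otherwise. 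Applying $\diamondsuit(\homh)$ to this $F$ yields the function $g$ used above. Now fix an arbitrary $A\subseteq[\omega]^{<\omega}$ and let $\hat f_A\colon\omega_1\to 2$ be the branch that honestly codes $A$ together with the tower $\langle Y_\alpha\rangle$ actually built from $g$, so that $F(\hat f_A\upharpoonright\lambda)=\varphi_\lambda[A\cap\fu(Y_\lambda)]$ for every limit $\lambda$. The diamond guarantees that $\{\alpha<\omega_1\mid \fu(g(\alpha))\text{ reaps }F(\hat f_A\upharpoonright\alpha)\}$ is stationary; intersecting with the club of limit ordinals produces a limit $\lambda$ at which $\fu(g(\lambda))$ reaps $\varphi_\lambda[A\cap\fu(Y_\lambda)]$. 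Pulling this back through $\varphi_\lambda^{-1}$ and using $\fu(Y_{\lambda+1})=\varphi_\lambda^{-1}[\fu(g(\lambda))]\subseteq\fu(Y_\lambda)$ shows that either $\fu(Y_{\lambda+1})\subseteq A$ or $\fu(Y_{\lambda+1})\cap A=\varnothing$; thus $A$ or its complement lies in $u$, and $u$ is an ultrafilter.

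I expect the main obstacle to be the bookkeeping hidden in the definition of $F$: one must arrange the coding so that $F$ is genuinely Borel (which is where it matters that the limit terms and isomorphisms of Lemmas~\ref{pseudoint} and~\ref{isomorphism} are chosen in a Borel, hence uniformly reconstructible, fashion) and so that there is no circularity, in that $F$ must only decode whatever tower is written into its argument and never the ``true'' tower, so that the true tower built afterwards from $g$ can be faithfully recorded into the verifying branch $\hat f_A$ and read back correctly. The remaining delicate point is purely algebraic and is dispatched by Lemma~\ref{isomorphism}: checking that the reaping of $\varphi_\lambda[A\cap\fu(Y_\lambda)]$ by $\fu(g(\lambda))$ transports, under $\varphi_\lambda^{-1}$, to homogeneity of $\fu(Y_{\lambda+1})$ for $A$ itself.
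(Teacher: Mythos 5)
Your proposal is correct and follows essentially the same argument as the paper: code pairs $\langle A,\vec Y\rangle$ into branches, let $F$ transfer $A\cap\fu(Y_\lambda)$ to $[\omega]^{<\omega}$ via the canonical isomorphism of Lemma~\ref{isomorphism} applied to the pseudo-intersection from Lemma~\ref{pseudoint}, and pull the diamond guess back through $\varphi_\lambda^{-1}$ to get a tower element reaping $A$. The only difference is cosmetic bookkeeping --- you install the guess at successor stages $Y_{\lambda+1}$ and intersect the stationary set with the club of limits, whereas the paper folds the pseudo-intersection and the guess into a single step defining $X_\alpha$ --- and your treatment of stability, orderedness, and character is a legitimate expansion of details the paper leaves implicit.
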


\begin{proof}
 We let sequences in $2^{<\omega_1}$ code pairs $\langle A,\vec{X}\rangle$, 
 where $A\subseteq[\omega]^{<\omega}$, and 
 $\vec{X}=\langle X_\xi\big|\xi<\alpha\rangle$ is a sequence of 
 block sequences satisfying that for every $\xi<\beta<\alpha$, 
 $X_\beta\subseteq^*\fu(X_\xi)$. Also for every limit $\alpha<\omega_1$ we 
 fix an increasing cofinal sequence $\langle\alpha_n\big|n<\omega\rangle$. 
 Given a pair $\langle A,\vec{X}\rangle$, we define $F(\langle A,\vec{X}\rangle)$ 
 as follows: We first pick, using Lemma~\ref{pseudoint}, a block sequence 
 $Y$ such that for every $n<\omega$, 
 $Y\subseteq^*\fu(X_{\alpha_n})$. We then let 
 $\varphi_Y:\fu(Y)\longrightarrow[\omega]^{<\omega}\setminus\{\varnothing\}$ 
 be the isomorphism as in Lemma~\ref{isomorphism}, and finally define 
 $F(\langle A,\vec{X}\rangle)=\ran(\varphi_Y\upharpoonright(A\cap\fu(Y)))$.
 
  Now by our assumption that $\diamondsuit(\homh)$ holds, we can obtain 
 a $g:\omega_1\longrightarrow \{X\subseteq[\omega]^{<\omega}\big|X\text{ is an infinite block sequence}\}$ such that, for each $f:\omega_1\longrightarrow2$, 
 the set $\{\alpha<\omega_1\big|F(f\upharpoonright\alpha)\text{ is }\fu\text{-reaped by }\ g(\alpha)\}$ is stationary. 
 We use $g$ to recursively define our ultrafilter as follows. Suppose that we 
 already have $\langle X_\xi\big|\xi<\alpha\rangle$, satisfying that, 
 whenever $\xi<\eta<\alpha$, it is the case that $X_\eta\subseteq^*\fu(X_\xi)$.  Now 
grab the same cofinal sequence $\langle\alpha_n\big|n<\omega\rangle$ as in the 
construction of $F$, and let $Y$ be as in Lemma~\ref{pseudoint} for the 
sequence $\langle X_{\alpha_n}\big|n<\omega\rangle$. Let 
$\varphi_Y:\fu(Y)\longrightarrow[\omega]^{<\omega}\setminus\{\varnothing\}$ be the 
isomorphism from Lemma~\ref{isomorphism}, and define 
$X_\alpha=\varphi_Y^{-1}[g(\alpha)]\subseteq Y$. Then by construction, the final 
sequence $\langle X_\alpha\big|\alpha<\omega_1\rangle$ will be such that 
$\{\fu(X_\alpha)\big|\alpha<\omega_1\}$ generates a filter, which will be a 
union ultrafilter provided that we can show that it is actually an ultrafilter.

To see this, let $A\subseteq[\omega]^{<\omega}$ be arbitrary. Let 
$f:\omega_1\longrightarrow2$ be the function that codes the 
branch $\langle A,\langle X_\alpha\big|\alpha<\omega_1\rangle\rangle$ of our 
tree.  By the assumption on $g$, there will be an $\alpha$ (in fact, 
stationarily many of them, although we really only need one) such that $F(\langle A,\langle X_\xi\big|\xi<\alpha\rangle\rangle)$ is $\fu$-reaped by $g(\alpha)$. 
But recall that the former set is just $\ran(\varphi_Y\upharpoonright(A\cap\fu(Y)))$, where 
$Y$ was obtained from $\langle X_{\alpha_n}\big|n<\omega\rangle$ by means of 
Lemma~\ref{pseudoint}, and $\varphi_Y$ is the isomorphism from Lemma~\ref{isomorphism}. 
Since $\fu(g(\alpha))$ reaps this set, by pulling everything back via 
the isomorphism $\varphi_Y$, we get that $\varphi_Y^{-1}[\fu(g(\alpha))]=\fu(\varphi^{-1}[g(\alpha)])=\fu(X_\alpha)$ reaps $\varphi_Y^{-1}[\ran(\varphi_Y\upharpoonright(A\cap\fu(Y)))]=A\cap\fu(Y)$, and 
therefore also $A$. Thus our filter is actually an ultrafilter, and in fact a stable ordered union ultrafilter.
\end{proof}

As a corollary of the above, we obtain an answer to the question posed in~\cite[Question 4.11, p. 106]{tesis}, which was the main motivation behind this work.

\begin{cor}
In the iterated Sacks model, the statement ``there exists a stable ordered-union ultrafilter of character $\omega_1$ holds''.
\end{cor}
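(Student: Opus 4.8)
The plan is simply to chain together the two principal results already established in the paper. First I would invoke Theorem~\ref{diamondholdsinsacks}, which guarantees that the parametrized diamond principle $\diamondsuit(\homh)$ holds in the iterated Sacks model. Then I would apply Theorem~\ref{homhimpliesunion}, which asserts that $\diamondsuit(\homh)$ implies the existence of a stable ordered-union ultrafilter of character $\omega_1$. Composing these two implications delivers the conclusion directly: the iterated Sacks model satisfies $\diamondsuit(\homh)$, hence it satisfies the existential statement about stable ordered-union ultrafilters of character $\omega_1$.

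Because the substantive content has all been absorbed into the preceding theorems, I do not expect any genuine obstacle in this final step; it is a pure juxtaposition of the two main theorems. The smallness result $\homh=\omega_1$ in the iterated Sacks model (Theorem~\ref{homhsmallinsacks}) is what powers $\diamondsuit(\homh)$ through the fundamental Theorem~\ref{fundamental}, and the diamond-driven recursive construction of the ultrafilter is precisely the work carried out in the proof of Theorem~\ref{homhimpliesunion}. The only point worth double-checking is that the iterated Sacks model genuinely meets the structural hypotheses required by those theorems — in particular that it arises as a countable support iteration of proper Borel forcing notions of the form needed for Theorem~\ref{fundamental} to apply, and that $\homh=\omega_1$ holds there. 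Both of these have already been verified in the course of proving Theorems~\ref{homhsmallinsacks} and~\ref{diamondholdsinsacks}, so nothing further remains to be established, and the corollary follows at once.
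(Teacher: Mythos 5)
Your proposal is correct and matches the paper's own proof exactly: the paper likewise cites Theorem~\ref{diamondholdsinsacks} to get $\diamondsuit(\homh)$ in the iterated Sacks model and then applies Theorem~\ref{homhimpliesunion} to obtain the ultrafilter. Nothing further is needed.
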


\begin{proof}
By Theorem~\ref{diamondholdsinsacks}, the combinatorial principle $\diamondsuit(\homh)$ holds in the iterated Sacks model, thus by Theorem~\ref{homhimpliesunion} this implies that there exists an ultrafilter as required in this model.
\end{proof}

\subsection*{Acknowledgements}
The first author was partially supported by Postdoctoral Fellowship 
number 275049 from the Consejo Nacional de Ciencia y Tecnolog\'{\i}a 
(CONACyT), Mexico. The second author gratefully acknowledges support form a CONACyT
grant 177758 and a PAPIIT grant IN 100317.

\end{document}